\newtheorem{theorem}{Theorem}
\newtheorem{lemma}[theorem]{Lemma}
\theoremstyle{remark}
\newtheorem{remark}[theorem]{Remark}
\theoremstyle{definition}
\newcommand{\FF}{\mathbb{F}}
\newcommand{\PG}{\text{PG}}
\title{A construction for clique-free  pseudorandom graphs}
\author{Anurag Bishnoi\thanks{Department of Mathematics and Statistics, The University of Western Australia, Perth, Australia, \href{mailto:anurag.2357@gmail.com}{anurag.2357@gmail.com}. Research supported in part by a Humboldt Research Fellowship for Postdoctoral Researchers and by Discovery Early Career Award of the Australian Research Council (No.~DE190100666).}, Ferdinand Ihringer\thanks{Department of Mathematics: Analysis, Logic and Discrete Mathematics, Ghent University, Belgium, \href{mailto:ferdinand.ihringer@ugent.be}{ferdinand.ihringer@ugent.be} . The author is supported by a postdoctoral fellowship of the Research Foundation --- Flanders (FWO).}, Valentina Pepe\thanks{Department of Basic and Applied Sciences for Engineering, Sapienza University of Rome. The author is supported by INDAM (Istituto Nazionale Di Alta Matemetica)}
}
\begin{document}

\maketitle

\begin{abstract}
A construction of Alon and Krivelevich gives highly pseudorandom $K_k$-free graphs on $n$ vertices with edge density equal to $\Theta(n^{-1/(k -2)})$.
In this short note we improve their result by constructing an infinite family of highly pseudorandom $K_k$-free graphs with a higher edge density of  $\Theta(n^{-1/(k - 1)})$.
\end{abstract}

\section{Introduction}
Pseudorandom graphs are deterministic graphs that in some sense behave like random graphs.
They have played an important role in modern graph theory and theoretical computer science.
We refer to the survey by Krivelevich and Sudakov \cite{KrivSu06} for background and applications of pseudorandom graphs.

One well studied measure of pseudorandomness is in terms of the eigenvalues (of the adjacency matrix) of a graph.
A graph is called an \textit{$(n, d, \lambda)$-graph} if it is a $d$-regular graph on $n$ vertices, with its second largest eigenvalue
(in absolute value) at most $\lambda$.
By looking at the trace of the square of the adjacency matrix, we see that $\lambda = \Omega(\sqrt{d})$ for any such graph, whenever, say, $d < n/2$.
Graphs which have $\lambda = O(\sqrt{d})$ are known as \textit{optimally pseudorandom graphs}.

In \cite{Alon94}, Alon constructed a family of dense triangle-free optimally pseudorandom graphs to provide explicit graphs for a lower bound on the off-diagonal Ramsey number $R(3, t)$ (see \cite{Alon_survey} for a recent survey on many applications of his construction, and \cite{Conlon, Kopparty} for alternate constructions).
Alon's family is in fact extremal in the sense that any triangle-free $(n, d, \lambda)$-graph with $\lambda = O(\sqrt{d})$ must have $d/n = O(n^{-1/3})$, and Alon's family satisfies $d/n = \Omega(n^{-1/3})$.
The natural extension of this is to look at $K_k$-free graphs with the largest possible \textit{edge-density} $d/n$.
A simple application of the expander mixing lemma (cf.~\cite[Thm.~2.11]{KrivSu06}) shows that any $K_k$-free $(n, d, \lambda)$-graph with $\lambda = O(\sqrt{d})$ satisfies
\[\frac{d}{n} = O\left(n^{ \frac{-1}{2k - 3}}\right).\]
Several people have asked about the tightness of this bound \cite{Alon_survey, Conlon-Lee19, Fox-Lee-Sudakov13, Kri-Su-Sz04, Sudakov-Szabo-Vu05}.
Despite years of effort, this problem remains open for every $k \geq 4$.
The best known general construction for such graphs is the $20$ year old construction by Alon and Krivelevich \cite{Alon1997}, where the density $d/n$ is equal to $\Theta(n^{-1/(k - 2)})$.
As a step towards the conjecture, Conlon and Lee suggest that ``A first aim would be to beat the construction of Alon and Krivelevich'' \cite[Sec~6]{Conlon-Lee19}.
We provide such an improvement by constructing an infinite family of $K_k$-free optimally pseudorandom graphs with $d/n = \Theta(n^{-1/(k - 1)})$.

\section{Construction}

Our construction makes use of the finite geometry associated with a quadratic form over a finite field, see \cite{Ball2015,Munemasa1996}
for the general theory behind it. We repeat the relevant facts in the following.
We denote the $(k-1)$-dimensional projective space over $\FF_q$ by $\PG(k-1, q)$.
The \textit{points} of $\PG(k-1, q)$ are the $1$-spaces of $\FF_q^k$. 
As there are $q^k-1$ non-zero vectors in $\FF_q^k$, each non-zero vector spans a $1$-space, and
each $1$-space contains $q-1$ non-zero vectors, $\PG(k-1, q)$ contains $(q^k-1)/(q-1) = (1 + o(1))q^{k - 1}$ points.
We assume in the following that $q$ is the power of an \textit{odd} prime.

Define the quadratic form $Q: \FF_q^k \rightarrow \FF_q$ with $Q(x_1, \dots, x_k) = \xi x_1^2 +  \sum_{i=2}^k x_i^2$, where $\xi$ is a non-square of $\FF_q$, which exists because $q$ is odd.
We say that two points $x = \langle (x_1, \ldots, x_k ) \rangle$ and $y = \langle (y_1, \ldots, y_k) \rangle$
of $\PG(k-1, q)$ are orthogonal, denoted by $x \in y^\perp$, if \[\frac{1}{2}(Q(x+y) - Q(x) - Q(y)) = \xi  x_1y_1 +  \sum_{i=2}^k x_iy_i = 0.\]
Let $X_0$ be the set of \textit{singular points}, $X_\square$ be the set of square points and $X_\boxtimes$ be the set of non-square points of $\PG(k-1, q)$ with respect to $Q$, that is
\begin{align*}
  &X_0 = \{ x   \in \PG(k-1, q): Q(x) = 0 \},\\
  &X_\square = \{ x   \in \PG(k-1, q): Q(x) \text{ is a square in } \FF_q \setminus \{ 0 \} \},\\
  &X_\boxtimes = \{ x  \in \PG(k-1, q): Q(x) \text{ is a non-square in } \FF_q \}.
\end{align*}
Note that these point-sets of $\PG(k - 1, q)$ are well defined since $Q(\lambda x) = \lambda^2 Q(x)$, and hence being a square or not is a property of $1$-spaces of $\mathbb{F}_q^k$.
Let $\Gamma^\epsilon(k, q)$ be the graph with vertex set $X_\epsilon$ where two vertices $x$ and $y$ are adjacent if $x \in y^\perp$, for $\epsilon \in \{\square, \boxtimes\}$.
The graph $\Gamma^\square(k, q)$ will be our $K_k$-free pseudorandom graph.

\begin{remark}
These objects naturally belong to finite classical groups and they have been studied in the literature for more than 80 years (cf.~\cite{Witt1937}).
One can even argue that Jordan already understood them in 1870 \cite{Jordan1870}.
For a study of the associated graphs see for example Bannai et al.~\cite{Bannai90}.
For $k$ odd, $\Gamma^\epsilon(k, q)$ is
either the graph with vertex set $\Omega_1$ and adjacency relation $R_{(q+1)/2}$ in \cite[Sec.~6]{Bannai90}
or the graph with vertex set $\Omega_2$ and adjacency relation $R_{(q+1)/2}$ in \cite[Sec.~7]{Bannai90}.
For $k$ even, $\Gamma^\epsilon(k, q)$ either corresponds to the graph with adjacency relation $R_{(q+1)/2}$ in \cite[Sec.~4]{Bannai90} or \cite[Sec.~5]{Bannai90}.
Note that \cite{Bannai90} uses a different quadratic form (see Remark \ref{rem:choice_Q}), 
so $\Gamma^\square(k, q)$ can be isomorphic to their graph on non-zero squares or non-squares.
According to \cite{Bannai90}, the results of \cite{Bannai90}, which we use, can also be obtained from Soto-Andrade's work
in \cite{SotoAndrade1971,SotoAndrade1985} for $k$ even. 
Our graphs are also mentioned by Hubaut \cite[\S8.10~and~p.~377]{Hubaut1975} for $q=3$, and for $q=5$ when $n$ is odd by Willbrink \cite[\S7.D]{Brouwer1984} as they are strongly regular graphs.
\end{remark}

\begin{remark}
Our graphs are very similar to the ones used by by Alon and Krivelevich \cite[Sec.~2]{Alon1997}.
The differences are that (1) we use the bilinear form $\xi x_1y_1 +  \sum_{i=2}^k x_iy_i$ to define adjacency
while they use $\sum_{i=1}^k x_iy_i$; and that (2) our vertices are the points in $X_\square$,
while their vertices are all points $x$ of $\PG(k-1, q)$ with $x \notin x^\perp$, that is, the points in $X_\square \cup X_\boxtimes$.
We will see that $\Gamma^\square(k, q)$ has almost the same edge density as the Alon-Krivelevich graph, but while their graph is only $K_{k + 1}$-free (and has plenty of $K_k$'s) our graph is $K_k$-free.
\end{remark}

\begin{remark}\label{rem:choice_Q}
  It follows from the general theory of quadratic forms (cf.~\cite{Ball2015,Munemasa1996,Witt1937}) that our choice of $Q$
  does not matter (much) as there are only two isometry types of non-degenerate quadratic forms on $\FF_q^k$.
  The isometry type depends on the discriminant of the form.
  For $k$ odd, for any non-degenerate quadratic form either the graph on non-zero squares or the graph on non-squares is $K_k$-free.
  For $k$ even, for one type of non-degenerate quadratic form the graph on non-zero squares and the graph on non-squares
  are both $K_k$-free. In each case, the corresponding $K_k$-free graph is isomorphic to our graph with the same parameters.
\end{remark}

It is well-known, see for instance Proposition 4.1 in \cite{Munemasa1996}, that $|X_0| = (1+o(1)) q^{k-2}$.
Hence, $|X_\square \cup X_\boxtimes| = (q^k - 1)/(q - 1) - |X_0| = (1+o(1)) q^{k-1}$.
The number of vertices $|X_\epsilon|$ is in fact $(1 + o(1)) q^{k-1}/2$.
The number of solutions of $Q(x) = a$ is roughly the same for any $a$. As $(q-1)/2$ of the elements of $\FF_q$
are non-zero squares, and $(q-1)/2$ of the elements of $\FF_q$ are non-square, $|X_\epsilon| = (1 + o(1)) q^{k-1}/2$ is plausible.
The exact number is exactly the sum of the first row in the corresponding character table in \cite{Bannai90}.
These are the tables VI--IX, Theorem 6.3 and Theorem 7.3.

We use the following fact which can be deduced from the general theory of quadratic forms due to Witt from 1937 \cite{Witt1937}:
\begin{lemma}\label{lem:vtx_trans}
  The graph $\Gamma^\epsilon(k, q)$ is vertex-transitive.
\end{lemma}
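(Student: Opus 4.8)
The plan is to produce a group of automorphisms of $\Gamma^\epsilon(k,q)$ that already acts transitively on the vertices, namely the isometry group of the quadratic form $Q$, and to feed it Witt's extension theorem. Concretely, recall that Witt's theorem guarantees that any isometry between two subspaces of $(\FF_q^k, Q)$ extends to an isometry of the whole space; in particular, for any two vectors $u, v \in \FF_q^k$ with $Q(u) = Q(v) \neq 0$ there is an isometry $g$ of $Q$ with $g(u) = v$. Indeed, the map $\langle u \rangle \to \langle v \rangle$, $\lambda u \mapsto \lambda v$, is a well-defined isometry of $1$-dimensional subspaces since $Q(\lambda u) = \lambda^2 Q(u) = \lambda^2 Q(v) = Q(\lambda v)$, and Witt extends it to all of $\FF_q^k$.

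Next I would observe that every isometry $g$ of $Q$ induces an automorphism of $\Gamma^\epsilon(k,q)$. Being linear and invertible, $g$ acts on $\PG(k-1,q)$; since $Q(g(x)) = Q(x)$ for all $x$, and being a (nonzero) square is a property of the $1$-space, $g$ fixes each of $X_0$, $X_\square$, $X_\boxtimes$ setwise. Moreover $g$ preserves the associated symmetric bilinear form $b(x,y) = \tfrac12(Q(x+y) - Q(x) - Q(y)) = \xi x_1 y_1 + \sum_{i=2}^k x_i y_i$, so $x \in y^\perp$ if and only if $g(x) \in g(y)^\perp$. Hence the restriction of $g$ to $X_\epsilon$ is a graph automorphism.

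It then remains to verify transitivity. Let $\langle u \rangle, \langle v \rangle \in X_\epsilon$ be two vertices. If $\epsilon = \square$ then $Q(u) = c^2$ and $Q(v) = d^2$ for some $c, d \in \FF_q^\ast$, so $Q(u/c) = Q(v/d) = 1$; if $\epsilon = \boxtimes$ then $Q(u) = \xi c^2$ and $Q(v) = \xi d^2$ for some $c, d \in \FF_q^\ast$, so $Q(u/c) = Q(v/d) = \xi$. In either case the two normalized representatives have equal nonzero norm, so by the first paragraph there is an isometry $g$ of $Q$ with $g(u/c) = v/d$, whence $g(\langle u \rangle) = \langle v \rangle$. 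Therefore the isometry group of $Q$ acts transitively on $X_\epsilon$, and $\Gamma^\epsilon(k,q)$ is vertex-transitive.

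The only genuinely delicate point is the bookkeeping between vectors and projective points: one must use that a point of $X_\square$ (resp.\ $X_\boxtimes$) always has a representative of norm $1$ (resp.\ norm $\xi$), which is where $q$ odd enters, and one must invoke Witt's theorem precisely in the easy special case of extending an isometry defined on a $1$-dimensional subspace — valid because the given form $Q$ is non-degenerate. Everything else is formal, so I do not expect any substantial obstacle; if desired, the statement can alternatively be quoted directly from the cited references on these classical graphs.
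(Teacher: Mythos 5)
Your proof is correct and takes the same overall route as the paper: exhibit the orthogonal group $G$ of $Q$ as a group of automorphisms of $\Gamma^\epsilon(k,q)$ acting transitively on $X_\epsilon$. The difference is one of explicitness rather than of substance. The paper itself prefaces the lemma by saying it ``can be deduced from the general theory of quadratic forms due to Witt from 1937,'' which is exactly what you do by invoking Witt's extension theorem for the $1$-dimensional isometry $\lambda u \mapsto \lambda v$ (with $u,v$ normalized to have equal nonzero norm $1$ or $\xi$). The paper's sketch, by contrast, unwinds this argument by hand: starting from $c_k = y$, it iteratively constructs an orthogonal basis $c_1,\dots,c_k$ with $c_i \in \langle c_{i+1},\dots,c_k\rangle^\perp$, chooses the signs so that after scaling $\beta(c_1,c_1)=\xi$ and $\beta(c_i,c_i)=1$ for $i>1$, and observes that the matrix $A$ with these columns then satisfies $A^T B A = B$ and maps the canonical vertex $\langle(0,\dots,0,1)\rangle$ to $\langle y\rangle$. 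Your version is shorter and quotes a classical theorem as a black box; the paper's is self-contained and makes the transporting isometry concrete. Your bookkeeping — that being a square/non-square is a $1$-space invariant, that isometries fix $X_0, X_\square, X_\boxtimes$ setwise and preserve $\perp$, and that $q$ odd and non-degeneracy of $Q$ are the hypotheses that make the normalization and Witt's theorem kick in — is exactly the care needed to pass cleanly from vectors to projective points.
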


Let $V$ be the $\mathbb{F}_q$-vector space $\mathbb{F}_q^k$. The orthogonal group associated to $Q$ is the subgroup of $\mathrm{GL}(V)$ given by $G=\{f \in \mathrm{GL}(V)~|~Q(f(x))=Q(x) \text{ for all }x \in V\}$. 
The group  $G$ obviously preserves the orthogonality relation $\perp$. 
We provide a sketch of a proof that shows that $G$ acts transitively on $X_\square$.
A similar proof works for transitivity on $X_\boxtimes$. 

\begin{proof}[Sketch of proof]
Let $B$ be the matrix associated to $Q$, that is $Q(x)=x^TBx$, and hence we have the bilinear form $\beta(x,y)=\frac{1}{2}(Q(x+y) - Q(x) - Q(y)) = x^TBy$. 
If $A$ is the matrix of $f \in \mathrm{GL}(V)$, then $f \in G$ if and only if $A^TBA=B$. 
Hence, if $c_1,c_2,\ldots,c_k$ are the columns of $A$, then we have $\beta(c_i,c_j)=0$ for all $i \neq j$, $\beta(c_1,c_1)=\xi$ and $\beta(c_i,c_i)=1$ for all $i >1$. 
Let $x=\langle (0,\ldots, 0, 1) \rangle$ and $\langle y \rangle$ any other element of $X_\square$. 
For any subspace $U$ of $\mathrm{PG}(V)$ which is not entirely contained in $X_0$ (that is, a non-singular subspace), $Q$ induces a non-degenerate quadratic form on $U$ and hence $|U \cap X_\epsilon| = (1 + o(1))q^{\dim U}/2$ for $\epsilon \in \{\square, \boxtimes\}$. 
Let $c_k = y$ and iteratively pick $c_{k - 1}, c_{k - 2}, \dots, c_1$ such that $c_i \in \langle c_{i + 1}, \dots, c_k \rangle^\perp$ for all $1 \leq i \leq k - 1$, $\langle c_1 \rangle \in X_\boxtimes$ and $\langle c_2 \rangle, \dots, \langle c_{k - 2} \rangle \in X_\square$. 
We can scale these vectors so that $\beta(c_1, c_1) = \xi$ and $\beta(c_i, c_i) = 1$ for all $i > 1$. 
Then the map $f$ associated to the matrix $A$ with $c_i$'s as its column is in $G$ and we have $f(\langle x \rangle) = \langle y \rangle$. 
\end{proof}

\begin{lemma}\label{lem:ind}
For any $k \geq 3$, the graph induced on the neighborhood of every vertex of $\Gamma^\square(k, q)$ is isomorphic
  to $\Gamma^\square(k-1, q)$.
\end{lemma}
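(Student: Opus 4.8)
The plan is to exploit vertex-transitivity (Lemma~\ref{lem:vtx_trans}) to reduce to a single conveniently chosen vertex, and then to recognise the induced neighbourhood graph directly as a copy of $\Gamma^\square(k-1,q)$ living inside a hyperplane of $\PG(k-1,q)$.

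First I would take $v = \langle (0,\dots,0,1)\rangle$. Since $Q(v) = 1$ is a non-zero square, $v$ is a vertex of $\Gamma^\square(k,q)$, so by Lemma~\ref{lem:vtx_trans} it suffices to analyse the neighbourhood of this particular $v$. A direct computation with the bilinear form $\beta(x,y) = \xi x_1 y_1 + \sum_{i=2}^k x_i y_i$ shows that $v^\perp$ is exactly the hyperplane $H\colon x_k = 0$, which we identify with $\PG(k-2,q)$ via $\langle (x_1,\dots,x_{k-1},0)\rangle \mapsto \langle (x_1,\dots,x_{k-1})\rangle$.

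Next I would observe that the restriction $Q'$ of $Q$ to $H$ is $Q'(x_1,\dots,x_{k-1}) = \xi x_1^2 + \sum_{i=2}^{k-1} x_i^2$, which is precisely the quadratic form used to define $\Gamma^\square(k-1,q)$ on $\PG(k-2,q)$; in particular it is non-degenerate and of the same type, so no appeal to Remark~\ref{rem:choice_Q} is needed. Hence the neighbours of $v$ --- the points of $X_\square$ lying in $v^\perp$ --- correspond under the above identification exactly to the vertices of $\Gamma^\square(k-1,q)$. Finally, for two such points $x,y$ we have $x_k = y_k = 0$, so $\beta(x,y) = \xi x_1 y_1 + \sum_{i=2}^{k-1} x_i y_i$ is the bilinear form associated to $Q'$; thus $x$ and $y$ are adjacent in $\Gamma^\square(k,q)$ if and only if they are adjacent in $\Gamma^\square(k-1,q)$. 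This identifies the induced subgraph on $N(v)$ with $\Gamma^\square(k-1,q)$, and vertex-transitivity then gives the statement for every vertex.

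I do not expect a serious obstacle here: the only point requiring a little care is checking that the form induced on the hyperplane $v^\perp$ is again non-degenerate and of the correct type, and the choice $v = \langle (0,\dots,0,1)\rangle$ makes this transparent because the induced form is literally the defining form of $\Gamma^\square(k-1,q)$. Had one instead picked an arbitrary square vertex, one would need to invoke Witt's theorem to see that $v^\perp$ carries a non-degenerate form of the same discriminant type, which is exactly why working transitively through a single explicit vertex is the cleanest route.
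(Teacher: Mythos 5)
Your proposal is correct and follows essentially the same route as the paper: reduce via vertex-transitivity to $v = \langle(0,\dots,0,1)\rangle$, observe that $v^\perp$ is the coordinate hyperplane $x_k=0$, and note that $Q$ restricts there to exactly the defining form of $\Gamma^\square(k-1,q)$. The paper is slightly more terse (it does not explicitly check that adjacency is preserved under the identification), but the argument is the same.
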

\begin{proof}
We can choose the vertex as $x = \langle (0, \ldots, 0, 1) \rangle$ since $Q(x) = 1$ is a square and the graph is vertex-transitive  by Lemma \ref{lem:vtx_trans}.
  As $x^\perp = \{ \langle y \rangle : y = (y_1, \dots, y_k) \in \FF_q^k, y_k=0 \}$,
  the quadratic form on $x^\perp$ is $\xi x_1^2 +  \sum_{i=2}^{k-1} x_i^2$, and hence
  the neighborhood of $x$ is isomorphic to $\Gamma^\square(k-1, q)$.
\end{proof}

\begin{lemma}\label{lem:k_eq_2}
  The graph $\Gamma^\square(2, q)$ is $K_2$-free and has $(1+o(1)) q/2$ vertices.
\end{lemma}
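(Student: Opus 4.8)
The plan is to treat $k = 2$ as an explicit base case for the induction of Lemma~\ref{lem:ind}. Here the ambient space is the projective line $\PG(1, q)$, which has exactly $q + 1$ points, and $Q(x_1, x_2) = \xi x_1^2 + x_2^2$. First I would record that a point $\langle x \rangle$ lies in $X_\square \cup X_\boxtimes$ precisely when $Q(x) \neq 0$, i.e.\ when $\langle x \rangle$ is non-singular, and that for such a point $x^\perp$ is a hyperplane of $\PG(1, q)$ not through $\langle x \rangle$ (since $\langle x\rangle\in x^\perp$ would force $\beta(x,x)=Q(x)=0$) — hence a single point. So every vertex of $\Gamma^\square(2, q)$ has a unique ``potential neighbour'', and the whole statement reduces to identifying that point.

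For the $K_2$-freeness I would compute $x^\perp$ outright: if $x = (a, b)$ then $\langle (b, -\xi a) \rangle$ is orthogonal to $\langle x \rangle$ (indeed $\xi a b + b(-\xi a) = 0$), it is non-zero because $\xi \neq 0$, and it is the unique such point. Now
\[ Q(b, -\xi a) = \xi b^2 + \xi^2 a^2 = \xi(\xi a^2 + b^2) = \xi\, Q(x). \]
Since $\xi$ is a non-square, $Q(x)$ is a non-zero square if and only if $\xi\, Q(x)$ is a non-zero non-square; that is, the unique point of $x^\perp$ lies in $X_\boxtimes$ whenever $\langle x \rangle \in X_\square$ (and symmetrically in $X_\square$ whenever $\langle x \rangle \in X_\boxtimes$). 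In particular no two vertices of $\Gamma^\square(2, q)$ are adjacent, so the graph has no edges and is trivially $K_2$-free.

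For the vertex count I would observe that the map $\langle x \rangle \mapsto x^\perp$ is an involution on the set of non-singular points of $\PG(1, q)$, because $Q$ is non-degenerate and hence $(x^\perp)^\perp = \langle x \rangle$; by the previous paragraph it interchanges $X_\square$ and $X_\boxtimes$. Hence $|X_\square| = |X_\boxtimes|$ and therefore $2\,|X_\square| = (q + 1) - |X_0|$. Finally $|X_0| \le 2$, since $X_0$ is a non-degenerate quadric of $\PG(1, q)$ (it has $0$ points when $-\xi$ is a non-square and $2$ points when $-\xi$ is a square). Thus $|X_\square| \in \{(q-1)/2,\ (q+1)/2\} = (1 + o(1))q/2$, as claimed. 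Alternatively, one can simply invoke the fact recorded above that $|X_\epsilon| = (1 + o(1))q^{k-1}/2$ for every $k$.

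There is no serious obstacle here — the lemma is a warm-up base case — but the one point that genuinely uses the hypothesis is the identity $Q(x^\perp) = \xi\, Q(x)$ combined with $\xi$ being a non-square: this is exactly why the construction uses the form $\xi x_1^2 + \sum_{i \geq 2} x_i^2$ rather than the standard dot product, and it is the seed that propagates through Lemma~\ref{lem:ind} to force $K_k$-freeness of $\Gamma^\square(k,q)$.
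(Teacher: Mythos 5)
Your proof is correct and follows essentially the same route as the paper: it computes the unique orthogonal point $\langle(b,-\xi a)\rangle$ of $\langle(a,b)\rangle$, uses the identity $Q(b,-\xi a)=\xi\,Q(a,b)$ together with $\xi$ being a non-square to show this point lives in $X_\boxtimes$ (hence no edges), and then uses the same map as a bijection between $X_\square$ and $X_\boxtimes$ plus the bound $|X_0|\le 2$ to count vertices. The only difference is cosmetic: you pin down $|X_0|\in\{0,2\}$ exactly according to whether $-\xi$ is a square, whereas the paper just notes $|X_0|\le 2$.
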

\begin{proof}
  Let $\langle (a_1, a_2) \rangle$ be an arbitrary point of $X_\square$.
  The point orthogonal to  $\langle (a_1,a_2) \rangle$
  is $\langle ( a_2, -\xi a_1)\rangle$.
  As $\xi$ is a non-square, $Q(a_2, -\xi a_1)=\xi a_2^2 + \xi^2 a_1^2 =\xi Q(a_1,a_2)$ is a non-square.
  Therefore, $\langle (a_1,a_2) \rangle \in X_\square$ and its orthogonal point $\langle ( a_2, \xi a_1) \rangle \in X_\boxtimes$, which shows that  $\Gamma^\square(2, q)$  has no edges.
Furthermore,  this gives a bijection between $X_\square$ and $X_\boxtimes$.
At most two points $\langle (a_1, a_2) \rangle$ satisfy $0 = Q(a_1, a_2) = \xi a_1^2 +  a_2^2$, and hence $|X_\square \cup X_\boxtimes| \geq q-1$.
Therefore, $|X_\square| = (1+o(1))q/2$.
\end{proof}

By combining Lemmas \ref{lem:ind} and \ref{lem:k_eq_2}, we obtain the following.

\begin{theorem}
  The graph $\Gamma^\square(k, q)$ is $K_k$-free for all $k \geq 2$. \qed
\end{theorem}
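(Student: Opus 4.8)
The plan is to prove the statement by induction on $k$, using Lemma~\ref{lem:k_eq_2} as the base case and Lemma~\ref{lem:ind} to drive the inductive step. The base case $k = 2$ is exactly the content of Lemma~\ref{lem:k_eq_2}: the graph $\Gamma^\square(2, q)$ has no edges at all, so a fortiori it contains no $K_2$.

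For the inductive step, I would assume that $\Gamma^\square(k - 1, q)$ is $K_{k-1}$-free for some $k \geq 3$, and argue by contradiction: suppose $\Gamma^\square(k, q)$ contains a clique on $k$ vertices, say $\{v_1, \dots, v_k\}$. Fixing the vertex $v_k$, the vertices $v_1, \dots, v_{k-1}$ are pairwise adjacent and each is adjacent to $v_k$, so they all lie in the neighborhood of $v_k$ and induce a copy of $K_{k-1}$ in the subgraph of $\Gamma^\square(k,q)$ induced on $N(v_k)$. By Lemma~\ref{lem:ind}, that induced subgraph is isomorphic to $\Gamma^\square(k - 1, q)$, which is $K_{k-1}$-free by the inductive hypothesis --- a contradiction. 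Hence $\Gamma^\square(k, q)$ is $K_k$-free, which completes the induction.

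There is essentially no remaining obstacle: all of the geometric substance has already been absorbed into Lemmas~\ref{lem:vtx_trans}, \ref{lem:ind}, and \ref{lem:k_eq_2}. The only point that deserves a moment's care is that Lemma~\ref{lem:ind} provides an honest isomorphism between the induced neighborhood graph and $\Gamma^\square(k-1,q)$ --- not merely an embedding --- so that the inductive hypothesis transfers without loss; this in turn rests on the vertex-transitivity of Lemma~\ref{lem:vtx_trans}, which lets us move the chosen vertex to $\langle(0,\dots,0,1)\rangle$ and directly read off the quadratic form induced on its perpendicular space.
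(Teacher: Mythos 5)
Your proof is correct and matches the paper's argument exactly: the paper states the theorem follows "by combining Lemmas~\ref{lem:ind} and~\ref{lem:k_eq_2}," i.e.\ induction on $k$ with the $K_2$-free base case and the neighborhood-isomorphism inductive step you spell out. Nothing is missing.
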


\begin{remark}
The graph $\Gamma^\square(1, q)$ has no vertices since for every non-zero square $\lambda \in \mathbb{F}_q$ the element $\xi \lambda$ is a non-square, and hence this graph is $K_1$-free.
We could have started with this as the base case of our induction but we believe that the non-trivial case of $k = 2$ is more instructive.
\end{remark}

To estimate the second largest eigenvalue of our graph we use \textit{interlacing} of eigenvalues.
Let $\Gamma'$ be a graph on $m$ vertices and $\Gamma$ an induced subgraph of $\Gamma'$ on $n$ vertices.
Let $\lambda_1' \geq \lambda_2' \geq \cdots \geq \lambda_m'$ be the eigenvalues of $\Gamma'$, and $\lambda_1 \geq \lambda_2 \geq \cdots \geq \lambda_n$ the eigenvalues of $\Gamma$.
Interlacing says that $\lambda_i' \geq \lambda_i \geq \lambda_{m-n+i}'$, for all $i = 1, \dots, n$ (see for example \cite[Corollary 2.2]{Haemers1995}).

\begin{theorem}
  The graph $\Gamma^\square(k, q)$ is an $(n, d, \lambda)$-graph with $n = \Theta(q^{k - 1})$, $d = \Theta(q^{k - 2})$ and $\lambda = q^{(k - 2)/2}$.
\end{theorem}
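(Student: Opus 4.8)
The plan is to extract $n$ and $d$ from the lemmas already proved and to bound the second eigenvalue by interlacing $\Gamma^\square(k,q)$ inside a larger graph with an essentially two-valued spectrum. Assume $k\ge 3$; when $k=2$ the graph has no edges by Lemma~\ref{lem:k_eq_2}, so there is nothing to do. The count $n=|X_\square|=(1+o(1))q^{k-1}/2=\Theta(q^{k-1})$ has already been recorded. By Lemma~\ref{lem:vtx_trans} the graph is vertex-transitive, hence $d$-regular, and by Lemma~\ref{lem:ind} its degree $d$ equals the number of vertices of $\Gamma^\square(k-1,q)$, which by the same vertex count is $(1+o(1))q^{k-2}/2=\Theta(q^{k-2})$; in particular $\lambda_1=d$.

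For the eigenvalue bound I would introduce the polarity graph $\Gamma'$ on the full point set of $\PG(k-1,q)$: its $m:=(q^k-1)/(q-1)$ vertices are all points, $x\sim y$ whenever $\beta(x,y)=0$, and we put a loop at $x$ exactly when $Q(x)=0$. Because $Q(x)\ne 0$ for $x\in X_\square$, the principal submatrix of the adjacency matrix $A'$ of $\Gamma'$ on the rows and columns indexed by $X_\square$ has zero diagonal and coincides with the adjacency matrix of $\Gamma^\square(k,q)$; thus $\Gamma^\square(k,q)$ is an induced subgraph of $\Gamma'$. The advantage of $\Gamma'$ is its transparent spectrum: the $(x,z)$-entry of $(A')^2$ counts the points of $x^\perp\cap z^\perp$, which equals $(q^{k-1}-1)/(q-1)$ if $x=z$ and $(q^{k-2}-1)/(q-1)$ otherwise, since distinct points have distinct polar hyperplanes ($Q$ being non-degenerate). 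Hence
\[ (A')^2=\frac{q^{k-2}-1}{q-1}\,J+q^{k-2}\,I. \]
As $A'$ is symmetric with $A'\mathbf{1}=\tfrac{q^{k-1}-1}{q-1}\,\mathbf{1}$, on the invariant subspace $\mathbf{1}^\perp$ we get $(A')^2=q^{k-2}\,I$, so every eigenvalue of $\Gamma'$ other than the one afforded by $\mathbf{1}$ equals $\pm q^{(k-2)/2}$; and since $\tfrac{q^{k-1}-1}{q-1}=1+q+\dots+q^{k-2}>q^{(k-2)/2}$ for $k\ge 3$, the eigenvalue $\tfrac{q^{k-1}-1}{q-1}$ is simple and strictly largest. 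In particular $\lambda'_2\le q^{(k-2)/2}$ and $\lambda'_m\ge -q^{(k-2)/2}$.

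Now interlacing between $\Gamma^\square(k,q)$ (on $n$ vertices) and $\Gamma'$ (on $m$ vertices) yields, for every $2\le i\le n$, both $\lambda_i\le \lambda'_i\le \lambda'_2\le q^{(k-2)/2}$ and $\lambda_i\ge\lambda'_{m-n+i}\ge\lambda'_m\ge -q^{(k-2)/2}$, so $|\lambda_i|\le q^{(k-2)/2}$. Together with $\lambda_1=d=\Theta(q^{k-2})$ and $n=\Theta(q^{k-1})$ this establishes the theorem with $\lambda=q^{(k-2)/2}=\Theta(\sqrt d)$, so that $\Gamma^\square(k,q)$ is moreover optimally pseudorandom.

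The step I expect to be the crux is not the computation but the choice of ambient graph: one must compare $\Gamma^\square(k,q)$ with the full polarity graph on $\PG(k-1,q)$ rather than with the loop-free Alon-Krivelevich graph on $X_\square\cup X_\boxtimes$, whose spectrum is perturbed by the singular points; once $\Gamma'$ is in hand, evaluating $(A')^2$ and invoking interlacing are routine. A minor technical point is that $\Gamma'$ carries loops, but this is immaterial: interlacing is applied to the symmetric matrix $A'$ and a principal submatrix of it, which is exactly the situation it covers (and, if one insists on a loop-free ambient graph, deleting the loops changes the spectrum by at most $1$ by Weyl's inequality, still giving $\lambda=O(q^{(k-2)/2})$).
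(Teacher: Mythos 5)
Your proof is correct and follows essentially the same route as the paper: you interlace $\Gamma^\square(k,q)$ inside the polarity graph $\Gamma'$ on all of $\PG(k-1,q)$, compute $(A')^2 = \mu J + q^{k-2} I$ from the hyperplane-intersection count, and read off that all eigenvalues on $\mathbf{1}^\perp$ equal $\pm q^{(k-2)/2}$. The only cosmetic difference is that you deduce simplicity of the top eigenvalue from $\delta>q^{(k-2)/2}$ rather than from connectivity of $\Gamma'$; both are fine.
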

\begin{proof}
We know that the number of vertices in $\Gamma = \Gamma^\square(k, q)$ is $(1 + o(1)) q^{k - 1}/2$. 
By Lemma \ref{lem:ind}, the graph is $d$-regular with  $d = (1 + o(1)) q^{k-2}/2$.\footnote{The precise degree of the graph is also given in \cite{Bannai90}.
It corresponds to the value of $k_{(q+1)/2}$ in, depending on the case, Section 4, 5, 6, or 7.
In each section $k_{(q+1)/2}$ is either given in the first lemma or just before the first lemma.
}
Consider the graph $\Gamma'$ with vertex set $X_0 \cup X_\square \cup X_\boxtimes$ and adjacency defined by orthogonality with respect to our quadratic form.
We will first show that the second largest eigenvalue of $\Gamma'$ is  $q^{(k - 2)/2}$ (following the same argument as in \cite[Sec.~2]{Alon1997}) and then use interlacing to deduce that every eigenvalue of $\Gamma$ except $d$ has absolute value at most $q^{(k - 2)/2} = O(\sqrt{d})$.

The graph $\Gamma'$ has $m = q^{k - 1} + \cdots + q + 1$ vertices, and since the points orthogonal to any given point form a hyperplane the graph is $\delta$-regular with $\delta = q^{k -2} + \cdots + q + 1$.
As any two distinct hyperplanes intersect in a codimension $2$ subspace, any two distinct vertices have exactly $\mu = q^{k - 3} + \cdots + q + 1$ common neighbours.
Therefore, the adjacency matrix\footnote{The diagonal entries corresponding to the set of vertices that have a loop around them, that is $X_0$, are equal to $1$.} $A$ of $\Gamma'$ satisfies
\[A^2 = \mu J + (\delta - \mu) I,\]
where $J$ is the all one matrix and $I$ is the identity matrix, of dimension $m \times m$.
The largest eigenvalue of $A$ is $\delta$ as $\Gamma'$ is $\delta$-regular.
Moreover, the graph is clearly connected and thus the eigenspace corresponding to $\delta$ is of dimension $1$.
Let $v$ be an eigenvector with eigenvalue not equal to $\delta$.
Then $Jv = 0$, and hence $A^2v = (\delta - \mu)v$, which implies that the square of the eigenvalue of  $v$ is $\delta - \mu = q^{(k - 2)}$.
Thus, all eigenvalues of $\Gamma'$ except for the largest one have absolute value $q^{(k - 2)/2}$.

Say $\lambda_1' \geq \lambda_2' \geq \cdots \geq \lambda_m'$ are the eigenvalues of $\Gamma'$ and $\lambda_1 \geq \lambda_2 \geq \cdots \geq \lambda_n$ are the eigenvalues of $\Gamma$.
Then by interlacing we have $\lambda_2 \leq \lambda_2' = q^{(k - 2)/2}$ and $-\lambda_n \leq  - \lambda_m' = q^{(k - 2)/2}$.
Therefore, every eigenvalue of $\Gamma$ except $\lambda_1 = d$ has absolute value at most $q^{(k - 2)/2}$. 
\end{proof}

\section{Conclusion}
The first value of $k > 3$ for which we have a separation in the density of our $K_k$-free graphs from Alon's triangle-free graphs is at $k = 5$.
It will be interesting to find a family of $K_4$-free optimally pseudorandom graphs that have a higher density than Alon's graph.
Even more exciting would be to find tight examples for the conjecture, for any $k > 3$, or prove that such examples do not exist.
We believe that graphs coming from finite geometry, especially those related to quadratic forms, can play a role in better constructions.

In a recent work, Mubayi and Versta\"ete \cite{Mubayi-Verstraete} have shown that for any fixed $k \geq 3$, an optimally dense construction of $K_k$-free $(n, d, \lambda)$-graphs would imply the lower bound $R(k, t) = \Omega^*(t^{k - 1})$ on the off-diagonal Ramsey numbers, which matches the best known upper bound of $O^*(t^{k - 1})$.
In fact, any construction with edge density $\Omega(n^{-1/k})$ would already match the best known lower bounds on $R(k, t)$, proved by Bohman and Keevash \cite{Bohman-Keevash}.
Therefore, even such a small improvement on our construction would be very interesting.

\paragraph*{Acknowledgements} We would like to thank David Conlon for his helpful
remarks on an earlier draft of this paper. We would like to thank Akihiro
Munemasa whose work together with the second author in \cite[\S6]{Ihringer2019} on
cospectral graphs inspired the current result.
Finally, we would like to thank the referees for their careful reading and helpful comments.

\bibliographystyle{plain}

\end{document}